\title{Two geometric lemmas for $\s$-valued maps\\ and an application to the homogenization of spin systems}
\author{Andrea Braides and Valerio Vallocchia\\
Dipartimento di Matematica
 Universit\`a di Roma ``Tor Vergata''\\
 via della ricerca scientifica 1, 00133 Rome, Italy}
\date{}                                           
\renewcommand\nomgroup[1]{%
  \item[\bfseries
  \ifstrequal{#1}{N}{Basic notation}{%
  \ifstrequal{#1}{P}{Functions space}{%
  \ifstrequal{#1}{O}{Measure theory}{}}}%
]}
\theoremstyle{plain}
\newtheorem{thm}{Theorem}
\newtheorem{lem}{Lemma}
\newtheorem{prop}{Proposition}
\theoremstyle{definition}
\theoremstyle{remark}
\theoremstyle{remark}
\newtheorem{oss}{\textbf{Remark}}
\newcommand{\R}{\mathbb{R}}
\newcommand{\Z}{\mathbb{Z}}
\newcommand{\B}{{B}^{N}_1}
\def\S{\mathcal{S}^{N-1}}
\def\s{\mathcal{S}^{N-1}}
\newcommand{\N}{\mathbb{N}}
\def\e{\varepsilon}
\def\<{\langle}
\def\>{\rangle}
\begin{document}

\maketitle

\begin{abstract}
We prove two geometric lemmas for $\s$-valued functions that allow to modify sequences of lattice spin functions on a small percentage of nodes during a discrete-to-continuum process so as to have a fixed average. This is used to simplify known formulas for the homogenization of spin systems.

\smallskip
\noindent
{\bf Keywords:} spin systems, maps with values on the sphere, homogenization, discrete-to-continuum, lattice energies
\end{abstract}

\section{Introduction}
A motivation for the analysis in the present work is in the study of molecular models where particles are interacting through a potential including both orientation and position variables. In particular we have in mind potentials of Gay-Berne type in models of Liquid Crystals \cite{GAYB,ZAN,deG,Vir,MOLMOD}. In that context a molecule of a liquid crystal is thought of as an ellipsoid with a preferred axis, whose position is identified with a vector $w\in\R^3$ and whose orientation is a vector $u\in\mathcal{S}^2$. Given $\alpha$ and $\beta$ two such particles, the interaction energy will depend  on their orientations $u_\alpha$, $u_\beta$ and the distance vector $\zeta_{\alpha\beta}=w_\beta-w_\alpha$. We will concentrate on some properties on the dependence of the energy on $u$ due to the geometry of $\mathcal{S}^2$ (more in general, of $\s$). 

We restrict to a lattice model where all particles are considered as occupying the sites of a regular (cubic) lattice in the reference configuration. Note that in this assumption $\zeta_{\alpha\beta}=\beta-\alpha$ can be considered as an additional parameter and not a variable. Otherwise, in general the dependence on  $\zeta_{\alpha\beta}$ is thought to be of Lennard-Jones type (for the treatment of such energies, still widely incomplete, we refer to \cite{BLO,BH,BOx}). 

We introduce an energy density $G:\Z^m\times \Z^m\times \s\times \s\to \R\cup \{+\infty\}$, so that
$$G^\xi\left(\alpha,u,v\right)=G(\alpha,\alpha+\xi,u,v)$$
represents the free energy of two molecules oriented as $u$ and $v$, occupying the sites $\alpha$ and $\beta=\alpha+\xi$ in the reference lattice. Note that
we have included a dependence on $\alpha$ to allow for a microstructure at the lattice level, but the energy density is meaningful also in the homogeneous case, with $G^\xi$ independent of $\alpha$. Such energies are the basis for the variational analysis of complex multi-scale behaviours of spin systems (see, e.g., \cite{BRACIC} for the derivation of energies for liquid crystals, \cite{XY} for a study of the XY-model, \cite{CO} for a very refined study of the N-clock model, \cite{CS,CRS} for chirality effects). 

In order to understand the collective behaviour of a spin system, we introduce a small scaling parameter $\e>0$, so that the description of such a behaviour can be formalized as a limit as $\e\to0$. For each Lipschitz set $\Omega$ the discrete set $\Z_\e(\Omega):=\{\alpha\in\e\Z^m: (\alpha+[0,\e)^m)\cap\Omega\neq\emptyset\}$ represents a `discretization' of the set $\Omega$ at scale $\e$. We also let  $R>0$ define a cut-off parameter representing the relevant range of the interactions (which we assume to be finite).

We define the family of scaled  functionals
\begin{align*}
   E_\e(u)=
    \sum_{\substack{\xi\in \Z^m \\ |\xi|\leq R  }}\sum_{\alpha\in R_\e^{\xi}(\Omega)}\e^mG^\xi\left(\frac{\alpha}{\e}, u(\alpha),u(\alpha+\e\xi)\right)
    \end{align*}
with domain functions $u:\Z_\e(\Omega)\to\s$, where $
R_{\e}^{\xi}(\Omega):=\{\alpha \in \Z_\e(\Omega)\ :\ \alpha,\, \alpha+\e \xi \in \Omega \}$.

Extending functions defined on $\Z_\e(\Omega)$ to piecewise-constant interpolations, we may define a discrete-to-continuum convergence of $u_\e$ to $u$. The assumptions on $G$ ensure that $u$ takes values in the unit ball. 
We can then perform an asymptotic analysis using the notation of $\Gamma$-convergence (see e.g.~\cite{BRA1,BH,BT}). Energies as $E_\e$, but with $u$ taking values in general compact sets $K$ have been previously studied by Alicandro, Cicalese and Gloria (2008) \cite{ACG}, who describe the limit with a two-scale homogenization formula.  In the case of $\s$-valued functions we simplify the homogenization formula reducing to test functions $u$ satisfying a constraint on the average. This is a non-trivial fact since this constraint is non-convex, and its proof is the main technical point of the work. 

The key observation is that we can modify the sequences $u_{\e}$ so that they satisfy an exact condition on their average. We formalize this fact in two geometrical lemmas. The first one is a simple observation that each point in the unit ball in $\R^N$ with $N>1$ can be written exactly as the average of $k$ vectors in $S^{N-1}$ for all $k\ge 2$, while the second one allows to modify sequences $u_{\e_j}$ satisfying an asymptotic condition on the discrete average of $u_{\e}$ with a sequence $\widetilde u_\e$ satisfying a sharp one and with the same energy $E_\e$ up to a negligible error. This can be done if the asymptotic average of $u_{\e}$ has modulus strictly less than one. In this case, most of the values of $u_\e$ are not aligned; this  allows to use a small percentage of these values to correct the asymptotic average to a sharp one by using the first lemma.   

Optimizing on all the functions satisfying the same average condition satisfied by their limit we show that
the $\Gamma$-limit of the sequence $E_{\e}$,  for functions $u\in L^{\infty}(\Omega,\B)$ is a continuum functional
$$E_0(u)=\int_{\Omega}G_{\hom}(u)\,dx,$$
and the function $G_{\hom}$ satisfies a homogenization formula
 $$G_{\hom}(z)=
\lim_{T\to\infty}\frac{1}{T^m}\inf\Bigl\{\mathcal{E}_T(u) :\\
 {1\over T^m} \sum_{\alpha\in Z_1(Q_T)}u(\alpha)= z \Bigr\},$$
where $Q_T=(0,T)^m$ and 
$$\mathcal{E}_T(u)=\sum_{\substack{\xi\in \Z^m \\ |\xi|\leq R  }}\sum_{\beta\in R_1^{\xi}(Q_T)} G^\xi\left(\beta, u(\beta), u(\beta+\xi)\right) .$$
Note that the constraint in the homogenization formula involves the values of $u(\alpha)$, which belong to the non-convex set $\s$.
This is an improvement with respect to Theorem $5.3$  in \cite{ACG}, where  the integrand of the limit is characterized imposing a weaker constraint on the average of  $u$; namely it is shown that it equals
\begin{equation}
\label{ABGhom}
\overline G_{\hom}(z)=\lim_{\eta\to 0^+}
\lim_{T\to\infty}\frac{1}{T^m}\inf\Bigl\{
\mathcal{E}_T(u)
:\Bigl| {1\over T^m} \sum_{\alpha\in Z_1(Q_T)}u(\alpha)-z\Bigr|\le \eta  \Bigr\}.
\end{equation}
A formula with a sharp constraint may be useful in higher-order developments, which characterize microstructure, interfaces and singularities. 

\smallskip
The plan of the paper is as follows. In Section \ref{setting} we introduce the notation for discrete-to-continuum homogenization. In Section \ref{lemmas} we state and prove the geometric lemmas on $\s$-valued functions.
In Section \ref{homer} we prove the homogenization formula, and finally in Section \ref{homproof} we give a proof of the homogenization theorem.

 \section{Notation and setting}\label{setting}
Let $m,n\geq 1$, $N\geq 2$ be fixed. We denote by $\{e_1,\dots, e_m\}$ the standard basis of $\R^m$. Given two vectors $v_1,v_2\in \R^n$, by $(v_1, v_2)$ we  denote their scalar product. If $v\in \R^m$, we  use  $|v|$ for the usual euclidean norm. $\S$ is the standard unit sphere of $\R^N$ and $B^N_1$ the closed unit ball of $\R^N$. 
If $x\in \R$, its  integer part  is denoted by $\lfloor x\rfloor$. We also set $Q_T=(0,T)^m$ and $\mathcal{B}(\Omega)$ as the family of all open subsets of $\Omega$.
If $A$ is an open bounded set, given a function $u:A\to \R^N$ we denote its average over $A$ as $$\<u\>_{A}=\frac{1}{|A|}\int_{A}u(x)\,dx.$$

\subsection{Discrete functions}
\label{dsc}
 Let $\Omega\subset \R^m$ be an open bounded domain with Lipschitz boundary, and   let $\e>0$ be the spacing parameter of the cubic lattice $\e\Z^m$. We define the set $$\Z_\e(\Omega):=\{\alpha\in\e\Z^m: \, (\alpha+[0,\e)^m)\cap\Omega\neq \emptyset\}$$ and we will consider discrete functions  $u:\Z_\e(\Omega)\to\s$ defined on the lattice. 
For $\xi \in \Z^m$, we   define
\begin{gather*}
R_{\e}^{\xi}(\Omega):=\{\alpha \in \Z_\e(\Omega)\ :\ \alpha,\, \alpha+\e \xi \in \Omega \},\end{gather*} 
while the ``discrete'' average of a function $v:\Z_\e(A)\to \s$ over an open bounded domain $A$ will be denoted by
\begin{gather*} \<v\>^{d,\e}_A=\frac{1}{\#(\Z_\e(A))}\sum_{\alpha\in\Z_\e(A)}v(\alpha).
\end{gather*}

\subsection{Discrete energies}
We assume that the Borel function $G:\R^m\times \R^m\times \s\times \s\to \R$  satisfies the following conditions
\begin{eqnarray}
\label{fgrowth}
&&\hbox{(boundedness) } \sup\left\{|G(\alpha,\beta,u,v)| : \alpha,\beta\in\R^m, \, u,v\in\s \right\}<\infty;
\\
&&\hbox{(periodicity) there exists $l\in\N$ such that }
 \label{fperiodicity}
 G(\cdot,\cdot,u,v) \text { is }\ Q_l \ \text{ periodic};
 \\
&&\hbox{(lower semicontinuity) }
 \label{fsci}
 G\text{ is lower semicontinuous.}
 \end{eqnarray}

Given $\xi \in \R^m$, we use the notation
\begin{equation}\label{gxi}
G^{\xi}\left(\alpha,u,v\right)=G(\alpha,\alpha+\e\xi,u,v),
\end{equation}
and define the functionals
\begin{equation}\label{discreteenergy2d}
\sum_{\substack{\xi\in \Z^m \\ |\xi|\leq R  }}\sum_{\alpha\in R_\e^{\xi}(\Omega)}\e^mG^\xi\left(\frac{\alpha}{\e}, u(\alpha),u(\alpha+\e\xi)\right)
    \end{equation}
for $u: \Z_\e(\Omega)\to\s$.

\subsection{Discrete-to-continuum convergence}
In what follows we  identify each discrete function $u$  with its piecewise-constant extension $\tilde{u}$ defined by $\tilde{u}(t)=u(\alpha)$ if $t\in \alpha+[0,\e)^m$. We introduce  the sets: 
 \begin{gather*}
 \mathcal{A}_{\e}(\Omega;\s):=\Bigl\{ \tilde{u}:\R^m\to\s \ : \ \tilde{u}(t)\equiv u(\alpha)\  \mbox{if} \ t\in \alpha+[0,\e)^m, \text{for } \alpha\in \Z_\e(\Omega)\Bigr\}.
\end{gather*}
If no confusion is possible, we will simply write $u$ instead of $\tilde{u}$. If $\e=1$ we will simply write $\mathcal{A}(\Omega;\s)$ in the place of $\mathcal{A}_1(\Omega;\s)$. 

Up to the identification of each function $u$ with its piecewise-constant extension, we can consider  energies $E_\e:L^{\infty}(\Omega,\s)\to\R\cup \{+\infty\}$ of the following form:
\begin{equation}
 \label{energy2d}
    E_\e(u;\Omega)=
    \begin{cases}\displaystyle\sum_{\substack{\xi\in \Z^m \\ |\xi|\leq R  }}\sum_{\alpha\in R_\e^{\xi}(\Omega)}\e^mG^\xi\left(\frac{\alpha}{\e}, u(\alpha),u(\alpha+\e\xi)\right)\quad &\text{if } u\in \mathcal{A}_\e(\Omega;\s), \\[3ex]
    +\infty \quad &\text{otherwise}.
    \end{cases}
    \end{equation}

Let $\e_j\to 0$ and let  $\{u_j\}$ be  a sequence of functions $u_j:\Z_{\e_j}(\Omega)\to\s$. We will say that $\{u_j\}$ {\em converges to a function $u$} if $\tilde{u}_j$ is converging to $u$ weakly$^*$ in $L^{\infty}$. Then we will say that the functionals defined in \eqref{discreteenergy2d} $\Gamma$-converge to $E_0$ if $E_\e$ defined in \eqref{energy2d}  $\Gamma$-converge to $E_0$ with respect to that convergence.

\subsection{The homogenization theorem}
We will prove the following discrete-to-continuum homogenization theorem.
\begin{thm}
 \label{mainresult}
 Let $E_{\e}$ be the energy defined in \eqref{discreteenergy2d} and suppose that \eqref{fgrowth}--\eqref{fsci} hold. Then  $E_\e$ $\Gamma$-converge to the functional
\begin{align}\label{home}
E_0(u)=
\displaystyle\int_{\Omega}G_{\hom}(u)\,dx
\end{align}
defined for functions $u\in L^{\infty}(\Omega, B^N_1)$.
The function  $G_{\hom}$ is given by the following asymptotic formula
\begin{align}
\label{fghom}
G_{\hom}(z)=
\lim_{T\to\infty}\frac{1}{T^m}\inf\Bigl\{E_1(u;Q_T):\<u\>^{d,1}_{Q_T}=z  \Bigr\}.
\end{align}
\end{thm}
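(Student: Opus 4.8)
The plan is to establish the two $\Gamma$-convergence inequalities for $E_\e$ against the candidate limit $E_0$ with integrand $G_{\hom}$ given by \eqref{fghom}, using the known result of \cite{ACG} (Theorem 5.3) as a black box: that theorem already gives $\Gamma$-convergence to $\int_\Omega \overline G_{\hom}(u)\,dx$ with $\overline G_{\hom}$ as in \eqref{ABGhom}. Thus the whole problem reduces to the purely ``cell-formula'' identity $G_{\hom}(z)=\overline G_{\hom}(z)$ for every $z\in B^N_1$, after which \eqref{home}–\eqref{fghom} follow immediately. The inequality $\overline G_{\hom}(z)\le G_{\hom}(z)$ is trivial, since any $u$ admissible for the sharp constraint $\<u\>^{d,1}_{Q_T}=z$ is in particular admissible for the relaxed constraint $|\<u\>^{d,1}_{Q_T}-z|\le\eta$ for every $\eta>0$; passing to the infimum, then $T\to\infty$, then $\eta\to0$ gives the bound. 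So the real content is the opposite inequality $G_{\hom}(z)\le\overline G_{\hom}(z)$, and this is exactly where the second geometric lemma is used.

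For $G_{\hom}(z)\le\overline G_{\hom}(z)$ I would first dispose of the boundary case $|z|=1$: then $z\in\s$ and the relaxed constraint with $\eta$ small already forces essentially all values of a near-optimal $u$ to be close to $z$, so a direct comparison (replacing the few off-values by $z$, using boundedness \eqref{fgrowth}) shows both quantities equal the ``aligned'' energy; alternatively one notes $|z|=1$ is a measure-zero nuisance handled by continuity. The main case is $|z|<1$. Fix $\eta>0$ small and, for large $T$, take $u_T:\Z_1(Q_T)\to\s$ nearly optimal for the inner problem in \eqref{ABGhom}, so $\mathcal E_T(u_T)\le T^m(\overline G_{\hom}(z)+o(1))$ and $|\<u_T\>^{d,1}_{Q_T}-z|\le\eta$. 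The goal is to produce $\widetilde u_T$ with $\<\widetilde u_T\>^{d,1}_{Q_T}=z$ exactly and $\mathcal E_T(\widetilde u_T)\le\mathcal E_T(u_T)+o(T^m)$. This is precisely the assertion of the second geometric lemma (the ``modification on a small percentage of nodes'' lemma): because $|z|<1$, a positive fraction of the values $u_T(\alpha)$ are non-aligned, so one can select a set $S_T\subset\Z_1(Q_T)$ with $\#S_T\le C\eta\,T^m$ (or $o(T^m)$) on which, using the first lemma, the values are reassigned among finitely many vectors of $\s$ averaging to the required correction vector $\frac{1}{\#\Z_1(Q_T)}\sum(\text{old average}-z)\cdot\#\Z_1(Q_T)$, thereby restoring the exact average; since $\#S_T$ is a vanishing fraction of the nodes and $G$ is bounded, the interaction terms touching $S_T$ contribute at most $C\,\#S_T=o(T^m)$ to $\mathcal E_T$. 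Dividing by $T^m$, taking $T\to\infty$ and then $\eta\to0$ yields $G_{\hom}(z)\le\overline G_{\hom}(z)$.

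Having proved $G_{\hom}=\overline G_{\hom}$, I would then record the standard facts needed to invoke \cite{ACG}: that $G_{\hom}$ is well-defined (the limit in $T$ exists by a subadditivity/Fekete argument on the cell energies, which is part of the Alicandro–Cicalese–Gloria framework and may be cited), continuous in $z$ on $B^N_1$, and bounded; these give that $E_0(u)=\int_\Omega G_{\hom}(u)\,dx$ is well-defined and $L^\infty$-weak$^*$ lower semicontinuous, and that the $\Gamma$-limit identified in \cite{ACG} coincides with it. The $\Gamma$-liminf and $\Gamma$-limsup inequalities then transfer verbatim from \cite{ACG}, with the only new ingredient being the replacement of $\overline G_{\hom}$ by $G_{\hom}$ in the formula, justified above. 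I expect the single genuine obstacle to be the construction in the second geometric lemma — in particular making sure the corrector set $S_T$ can be chosen with controlled cardinality \emph{and} consisting of nodes whose current values leave enough ``room on the sphere'' to absorb an arbitrary small correction vector via the first lemma; this is the technical heart, but it is exactly what the two lemmas of Section \ref{lemmas} are designed to provide, so here it is used as a citation to those results.
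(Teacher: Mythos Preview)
Your route is valid but differs from the paper's. You prove the cell-formula identity $G_{\hom}=\overline G_{\hom}$ on $B^N_1$ and then import the entire $\Gamma$-convergence from \cite{ACG}; the paper instead cites \cite{ACG} only for the \emph{upper} bound (via the trivial inequality $\overline G_{\hom}\le G_{\hom}$) and proves the liminf inequality directly by a Fonseca--M\"uller blow-up, applying Lemma~\ref{lemmamedia} \emph{inside} the blow-up at a Lebesgue point $x_0$, where the rescaled sequence $v_j$ converges weakly$^*$ to the constant $u(x_0)$ and the lemma yields $\widetilde v_j$ with discrete average exactly $u(x_0)$, hence a test function for the cell problem at scale $T_j\to\infty$. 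Your approach is shorter and more modular; the paper's has the advantage that Lemma~\ref{lemmamedia} applies verbatim, since there one genuinely has weak$^*$ convergence to a constant. In your argument, by contrast, Lemma~\ref{lemmamedia} as stated corrects the discrete average to $\<u\>_A$ (the average of the weak$^*$ limit), not to a prescribed target $z$, so you actually need the easy variant of its proof in which the hypothesis $u_j\rightharpoonup^* u$ is replaced by $|\<u_j\>^d-z|\le\eta$ with $|z|<1$ and the conclusion is $\<\widetilde u_j\>^d=z$ with at most $C\eta\cdot\#\Z_j(A)$ points modified; this is immediate from the same proof but is not literally the lemma you cite. One further correction: $G_{\hom}$ is convex and lower semicontinuous on $B^N_1$ but need not be continuous on $\s$ (the paper gives an explicit counterexample after Proposition~\ref{ghomconvexity}), so your ``measure-zero nuisance handled by continuity'' alternative for $|z|=1$ does not work and you must retain the direct argument in that case.
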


The treatment of the average condition in \eqref{fghom} will be performed using a geometric lemma which exploits the geometry of $\s$, as shown in the next section.

\section{Two geometric lemmas}\label{lemmas}
In this section we provide two general lemmas. The first one is a simple observation on the characterisation of sums  of vectors in $\S$, while the second one allows to satisfy conditions on the average of discrete functions with values in $\S$. 

     \begin{lem}
\label{lemmapalle}
Let $u$ be  a vector in the ball $B_{k}^N$ in $\R^N$ centred in the origin and with radius $k\geq 2$; then $u$ can be written as the sum of $k$ vectors on $\S$: 
$$
u=\sum_{i=1}^k u_i \qquad u_i\in\S.
$$
 Equivalently, given $u\in B_{1}^N$ and $k\ge 2$, $u$ can be written as the average  of $k$ vectors on $\S$: 
$$
u={1\over k}\sum_{i=1}^k u_i \qquad u_i\in\S.
$$\end{lem}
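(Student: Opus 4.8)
The plan is to establish the ``sum'' formulation; the ``average'' version is then immediate, since $u\in B_1^N$ if and only if $ku\in B_k^N$, so one applies the sum version to $ku$ and divides by $k$. I would prove the sum version by induction on $k\ge 2$.

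For the base case $k=2$, let $u\in B_2^N$. Because $N\ge 2$, the orthogonal complement of $u$ in $\R^N$ has dimension at least one, so we can choose $w\in\s$ with $(w,u)=0$ (any $w\in\s$ will do if $u=0$). Setting
$$u_1=\frac{u}{2}+t\,w,\qquad u_2=\frac{u}{2}-t\,w,\qquad t=\sqrt{1-\tfrac{|u|^2}{4}}\,,$$
which is well defined since $|u|\le 2$, we get $u_1+u_2=u$ and $|u_1|^2=|u_2|^2=\tfrac{|u|^2}{4}+t^2=1$, hence $u_1,u_2\in\s$.

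For the inductive step, assume the claim for $k-1$ (with $k\ge 3$) and take $u\in B_k^N$. First peel off one unit vector: put $u_k=u/|u|$ if $u\neq 0$, and let $u_k$ be an arbitrary element of $\s$ if $u=0$. In both cases $|u-u_k|=\bigl|\,|u|-1\,\bigr|\le\max\{k-1,1\}=k-1$, so $u-u_k\in B_{k-1}^N$, and the inductive hypothesis gives $u-u_k=\sum_{i=1}^{k-1}u_i$ with $u_i\in\s$; adding $u_k$ yields the desired representation of $u$ as a sum of $k$ vectors of $\s$.

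The only delicate point is the base case, where the hypothesis $N\ge 2$ is genuinely needed: the statement fails for $N=1$ (for instance, no sum of $\pm 1$'s equals a non-integer point of $[-2,2]$). Once $k=2$ is in hand, the reduction from $B_k^N$ to $B_{k-1}^N$ by removing the normalized vector $u/|u|$ is elementary, so I anticipate no real obstacle beyond presenting the base-case construction cleanly.
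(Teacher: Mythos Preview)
Your proof is correct and follows essentially the same inductive scheme as the paper's: both peel off one unit vector at each step, reducing from $B_k^N$ to $B_{k-1}^N$. The only difference is cosmetic---you construct the peeled vector explicitly (via $u/|u|$ and an orthogonal $w$), whereas the paper phrases the same step as the nonemptiness of $(u+\S)\cap\S$ and $(u+\S)\cap B_{k-1}^N$.
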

\begin{proof}
We proceed by induction on $k$.\\
 Let $k=2$ and let $u\in B_2^N$. 
The set $(u+\S)\cap \S$ is not empty set. If we choose $v\in (u+\S)\cap \S$ then the first induction step is proven with $u_1=v$ and $u_2=(u-v)$.

Suppose that the thesis holds for $k-1$. Let $u\in B_k^N$ and note that the set $(u+\S)\cap B_{d-1}^N$ is not empty. If $v\in (u+\S)\cap B_{d-1}^N$ by the inductive hypothesis we may write $v=u_1+\dots+u_{k-1}$ with $u_j\in \S$. The thesis is then proved by setting $u_k=u-v$.
\end{proof}

\begin{lem}
\label{lemmamedia}
Let $A\subset \R^m$ be an open bounded set with Lipschitz boundary. Let $\delta_j>0$ be a spacing parameter and $u_j:\Z_{\delta_j}( A)\to\mathcal{S}^{N-1}$ be a sequence of discrete function. Suppose that $u_j\rightharpoonup^* u$ in $L^\infty (A,B^n_1)$ and that the average of $u$ on $A$
satisfies $|\<u\>_{A}|<1$. Then,  for all $j$ there exist $\tilde{u}_j$ such that 
\begin{enumerate}
\item  the discrete average $\<\tilde{u}_j\>^d_A:=\dfrac{1}{\#\Z_{\delta_j}( A)}\displaystyle\sum_{i\in\Z_{\delta_j}( A)}\tilde{u}_j(i)$ is equal to $\<u\>_{A}$;
\item the function $\tilde{u}_j$ is obtained by modifying the function  $u_j$ in at most $2 P_j$ points, with $\dfrac{P_j}{\#\Z_{\delta_j}( A)}\to 0$.
\end{enumerate} 
\end{lem}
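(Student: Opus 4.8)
The plan is to turn the weak$^*$ convergence $u_j \rightharpoonup^* u$ into a statement about the discrete averages: since $u_j$ is the piecewise-constant extension on cubes of side $\delta_j$, the discrete average $\langle u_j\rangle^d_A = \frac{1}{\#\Z_{\delta_j}(A)}\sum_{i} u_j(i)$ differs from the integral average $\langle u_j\rangle_A = \frac{1}{|A|}\int_A u_j\,dx$ by an error controlled by the boundary layer $\Z_{\delta_j}(A)\setminus A$, which has relative measure $O(\delta_j)\to 0$ (here one uses the Lipschitz regularity of $\partial A$). Testing the weak$^*$ convergence against the constant $1/|A|$ gives $\langle u_j\rangle_A\to\langle u\rangle_A$, hence $\eta_j:=\langle u_j\rangle^d_A - \langle u\rangle_A\to 0$. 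So up to an infinitesimal correction $u_j$ already almost has the right average; the job is to absorb $\eta_j$ exactly by changing few values.

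Next I would quantify how many nodes are "not too aligned." Since $|\langle u\rangle_A| < 1$, fix $\rho>0$ with $|\langle u\rangle_A| < 1-2\rho$; for $j$ large $|\langle u_j\rangle^d_A| < 1-\rho$. If almost all values $u_j(i)$ lay in a small spherical cap around some unit vector $w$, the discrete average would be close to $w$ and hence of modulus close to $1$, a contradiction. Making this precise: for each unit vector $w$, the set of nodes $i$ with $(u_j(i),w) \ge 1-\rho'$ (for a suitable $\rho'=\rho'(\rho)$) must have relative cardinality bounded away from $1$, say $\le 1-c$ for some $c=c(\rho,N)>0$. Consequently one can extract a subset $S_j\subset \Z_{\delta_j}(A)$ whose complement carries a definite fraction of "spread-out" values: concretely, I want $2k_j$ nodes $i$ (for a slowly growing $k_j$, e.g. $k_j=\lfloor\sqrt{\#\Z_{\delta_j}(A)}\,\rfloor$) on which I will reset $\tilde u_j$, leaving the rest untouched, so that $P_j := k_j$ satisfies $P_j/\#\Z_{\delta_j}(A)\to 0$ as required in item (2), while still $2k_j \to \infty$.

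Now the correction step using Lemma \ref{lemmapalle}. Write $M_j := \#\Z_{\delta_j}(A)$ and let $\sigma_j := \sum_{i} u_j(i) = M_j\,\langle u_j\rangle^d_A$ be the full sum. I want the modified sum to equal $M_j\,\langle u\rangle_A$. Choose $2k_j$ nodes to modify; remove their contribution $r_j := \sum_{i\in S_j} u_j(i)$, which is a vector with $|r_j|\le 2k_j$, and I must replace it by $2k_j$ new unit vectors summing to the target value $t_j := M_j\langle u\rangle_A - (\sigma_j - r_j) = r_j - M_j\eta_j$. Since $|M_j\eta_j|$ is small relative to $M_j$ but not obviously $\le 2k_j$, the point is to choose $S_j$ cleverly: pick the nodes of $S_j$ so that $r_j$ itself is close to the target, i.e. so that $|t_j| \le 2k_j$; this is where the "spread" estimate is used, because among the non-aligned nodes we can select $2k_j$ of them whose partial sum $r_j$ can be steered (by the choice of which ones to include) to lie within distance $1$ of any prescribed point of the ball $B^N_{2k_j}$ — more crudely, one splits the $2k_j$ slots and uses that for $N\ge 2$ we can hit a dense enough net. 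Once $|t_j|\le 2k_j$, Lemma \ref{lemmapalle} (in the "sum of $k$ vectors" form, with $k=2k_j\ge 2$) furnishes unit vectors $u^{(1)},\dots,u^{(2k_j)}$ with $\sum u^{(\ell)} = t_j$; define $\tilde u_j$ to equal these on $S_j$ and $u_j$ elsewhere. Then $\langle\tilde u_j\rangle^d_A = \langle u\rangle_A$ exactly, giving (1), and only $2k_j = 2P_j$ nodes were changed, giving (2).

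The main obstacle I anticipate is precisely the reconciliation in the last paragraph: ensuring that the discrepancy to be corrected, of size $|M_j\eta_j|$ which a priori is only $o(M_j)$, can be made $\le 2k_j = o(M_j)$ by the freedom in choosing $S_j$. The cleanest route is probably to first absorb the bulk of $\eta_j$ by modifying a number of nodes proportional to $|M_j\eta_j|$ (which is $o(M_j)$, hence admissible since we only need the modified fraction to vanish, not to be as small as $\sqrt{M_j}$) — reset those to unit vectors chosen greedily to cancel $M_j\eta_j$ up to an $O(1)$ remainder, using at each step a node whose current value has negative inner product with the running error, which exists by the spread estimate — and then clean up the remaining $O(1)$ error on $2$ further nodes via Lemma \ref{lemmapalle} with $k=2$. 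This two-stage scheme keeps all cardinality bookkeeping transparent and the total number of modified nodes is $O(|M_j\eta_j|) + 2 = o(M_j)$, so $P_j/\#\Z_{\delta_j}(A)\to 0$.
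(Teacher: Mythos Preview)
Your framework matches the paper's: reduce to $\eta_j:=\langle u_j\rangle^d_A-\langle u\rangle_A\to 0$, exploit $|\langle u\rangle_A|<1$ to get a quantitative ``spread'' of the values $u_j(i)$, then modify $o(M_j)$ nodes (where $M_j=\#\Z_{\delta_j}(A)$) and invoke Lemma~\ref{lemmapalle}. You also correctly isolate the obstacle: after removing the contribution $r_j$ of the chosen nodes, one needs $|t_j|=|r_j-M_j\eta_j|\le 2k_j$, and the naive bound $|r_j|\le 2k_j$ gives no slack to absorb $M_j|\eta_j|$.

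The gap is in your greedy workaround. You write ``at each step a node whose current value has negative inner product with the running error, which exists by the spread estimate''. But the spread estimate does \emph{not} guarantee this. It only says that for any unit vector $w$ the set $\{i:(u_j(i),w)>1-\rho'\}$ has cardinality at most $(1-c)M_j$; it says nothing about the existence of an $i$ with $(u_j(i),w)<0$. Concretely, take $N=2$ and values spread over an arc of opening $2\pi/3$ centred at $e_1$: the average has modulus well below $1$, yet every value has positive inner product with $e_1$. If the running error happens to point along $e_1$, your selection rule produces nothing. (A corrected greedy \emph{can} be made to work, using instead the guaranteed condition $(u_j(i),\hat e)>-(1-\rho')$, which yields a per-step decrease of order $\rho'$; but this is not what you wrote, and the final ``clean up on $2$ nodes'' also needs the extra care of choosing a non-aligned pair so that $|v_1+v_2-e|\le 2$.)

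The paper bypasses the greedy entirely with a one-shot device that directly manufactures the missing slack. From $|\langle u_j\rangle^d_A|\le 1-2b$ one shows that for \emph{every} index $i$ at least $B\,M_j$ indices $l$ satisfy $(u_j^i,u_j^l)\le 1-b$, with $B=b/(4-2b)$; hence one can extract $P_j$ \emph{disjoint pairs} $(i_s,l_s)$ with this property. The point is that each such pair has
\[
|u_j^{i_s}+u_j^{l_s}|=\sqrt{2+2(u_j^{i_s},u_j^{l_s})}\le\sqrt{4-2b}<2,
\]
so removing these $2P_j$ values gives $|r_j|\le P_j\sqrt{4-2b}$, and then
\[
|t_j|\le M_j|\eta_j|+P_j\sqrt{4-2b}.
\]
Choosing $c>0$ with $\sqrt{4-2b}<2-c$ and setting $P_j=\lfloor M_j|\eta_j|/c\rfloor+1$ makes the right-hand side strictly less than $2P_j$, so Lemma~\ref{lemmapalle} with $k=2P_j$ applies in a single stroke. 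This ``non-aligned pairs'' trick is exactly the missing idea that replaces your steering/greedy argument.
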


\begin{proof}
To simplify the notation we set $\Z_j(A)=\Z_{\delta_j}( A)$ and  $u_j^i=u_j(i)$.

Note that, by the weak convergence of $u_j$, 
\begin{align}
\label{average} 
\eta_j:=|\<u_j\>^d-\<u\>_{A}|=o(1)
\end{align}
as $j\to+\infty$.
We will treat the case  that $\eta_j\neq 0$ since otherwise we simply take $\tilde u_j=u_j$.

Since $\<u_j\>^d_A\to \<u\>_A$, by the hypothesis that $|\<u\>_A|<1$ we may suppose that
\begin{equation}\label{beq}
|\<u_j\>^d_A|\le 1-2b
\end{equation}
for all $j$, for some $b\in(0,1/2)$.

\smallskip
{\em Claim}: setting $B=b/(4-2b)$, for every $i\in\Z_j( A)$  there exist at least $B\,\#\Z_j( A)$ indices $l\in\Z_j( A)$ such that $(u_j^i, u_j^l)\leq 1-b$.

Indeed, otherwise there exists at least one index $i$ for which  the set 
\begin{align}
\label{contr}
\mathcal{A}_b:=\left\{l\in \Z_j( A) : (u_j^i, u_j^l)>1-b, \ l\neq i\right\}
\end{align} 
is such that $\#\mathcal{A}_b\geq  (1-B)\#\Z_j( A)$
 and  we have 
\begin{eqnarray*}\nonumber
|\<u_j\>^d_A|&\ge&(\<u_j\>^d, u_j^i)=\frac{1}{\#\Z_j( A)}\sum_{l\in \Z_j( A)}(u_j^l, u_j^i)\\
&=&\frac{1}{\#\Z_j( A)}\sum_{l\in \mathcal{A}_b }(u_j^l, u_j^i)+\frac{1}{\#\Z_j( A)}\sum_{l\in \Z_j( A)\backslash\mathcal{A}_b }(u_j^l, u_j^i)\\
&\geq&\frac{1}{\#\Z_j( A)}\left(\# \mathcal{A}_b(1-b)-\left(\#\Z_j( A)-\#\mathcal{A}_b\right)\right)
\\
&=&\frac{1}{\#\Z_j( A)}\Bigl((2-b)\#\mathcal{A}_b -\#\Z_j( A)\Bigr)\\
&\ge& (2-b)(1-B)-1=1-{3\over 2}b>|\<u_j\>^d_A|,
\end{eqnarray*}
where we have used \eqref{beq} in the last estimate. We then obtain a contradiction, thus proving the claim.

\bigskip
By the Claim above, there exist $(B/2)\#\Z_j( A)$ pairs of indices $(i_s,l_s)$ with $\{i_s,l_s\}\cap\{i_r,l_r\}=\emptyset$ if $r\neq s$ and
\begin{equation}
(u_j^{i_s}, u_j^{l_s})\leq 1-b.
\end{equation}

Since $\eta_j\to0$, with fixed $c>0$ we may suppose that
\begin{align}
 \label{goodpoints}
 B\,\#\Z_j( A)>2\left\lfloor\frac{\eta_j}{c}\#\Z_j( A)\right\rfloor+1
\end{align}
for all $j$.

We now set 
\begin{equation}\label{defPj}
P_j=\Bigl\lfloor{\eta_j\over c}\#\Z_j( A)\Bigr\rfloor+1,
\end{equation}
so that by \eqref{goodpoints} there exist pairs $(i_s,l_s)$ as above, with $s\in I_j:=\{1,\ldots, P_j\}$.
Note that $P_j$ satisfies the second claim of the theorem.

If for fixed $j$ we define the vector 
$$
w=\sum_{i\in \Z_j( A)} u^i_j-\#(\Z_j( A))\<u\>_A-\sum_{s\in I_j}(u_j^{i_s}+u_j^{l_s}),
$$
then we have
\begin{eqnarray*}
|w|&\le& \#(\Z_j( A))|\<u_j\>^d-\<u\>_A|+\sum_{s\in I_j}|u_j^{i_s}+u_j^{l_s}|
\\
&\le& \#(\Z_j( A))\eta_j+\sum_{s\in I_j}\sqrt{2+2(u_j^{i_s},u_j^{l_s})}
\\
&\le& \#(\Z_j( A))\eta_j+ P_j\,\sqrt{4-2b}.
\end{eqnarray*}
Since $
{\#\Z_j( A)}\eta_j< c\,P_j 
$
by \eqref{defPj}, we then have
$
|w|\le c\,P_j + P_j\,\sqrt{4-2b}$. 
We finally choose $c>0$ such that $
\sqrt{4-2b}< 2-c$,
so that 
$$
|w|< 2P_j.
$$

By Lemma \ref{lemmapalle}, applied with $u=-w$ and $k=2P_j$, there exists a set of $2P_j$ vectors in $\S$, that we may label as $$\{\overline u_j^{i_s}, \overline u_j^{l_s}: s\in I_j\},$$
such that
\begin{equation}
\sum_{s\in I_j}(\overline u_j^{i_s}+\overline u_j^{l_s})=-w.
\end{equation}
If we now define $\tilde u_j$ by setting
\begin{equation}
\tilde u_j^i=\begin{cases}
\overline u_j^{i} & \hbox{if }i\in\{i_s,l_s: s\in I_j\}\\
u_j^{i} & \hbox{otherwise,}
\end{cases}
\end{equation}
we have
\begin{eqnarray*}
\<\tilde u_j\>^d_A&=&{1\over \#\Z_j( A)}\Bigl(\sum_{i\in \Z_j( A)} u^i_j
-\sum_{s\in I_j}(u_j^{i_s}+u_j^{l_s})+\sum_{s\in I_j}(\overline u_j^{i_s}+\overline u_j^{l_s})\Bigr)
\\
&=&{1\over \#\Z_j( A)}\Bigl(\sum_{i\in \Z_j( A)} u^i_j
-\sum_{s\in I_j}(u_j^{i_s}+u_j^{l_s})-w\Bigr)\\
&=&\<u\>_A,\end{eqnarray*}
and the proof is concluded.
\end{proof}

\begin{oss}
\label{rmmedia}
The assumption $|\<u\>_A|<1$ in Lemma \ref{lemmamedia} is sharp: if  $|\<u\>_A|=1$, we may have $u_j\rightharpoonup^* u$, such that  $u_j\neq u$ and $|\<u_j\>^{d,\delta_j}_A|=1$ at every point (for example take $u$ and $u_j$ constant vectors in $\s$). In this case, in order to have $\<u_j\>^{d,\delta_j}_A=\<u\>_A$, we should change the function $u_j$ in every point.
\end{oss}

\section{The homogenization formula}\label{homer}
In this section we prove that the homogenization formula characterizing $G_{\hom}$ in Theorem \ref{mainresult} is well defined, and derive some properties of that function.

\begin{prop}
\label{minimumexistance}
 Let $G$ be a function satisfying \eqref{fgrowth}--\eqref{fsci} and let $G^\xi$ be defined as in \eqref{gxi}. For all $T>0$ consider an arbitrary $x_T\in\R^m$, then the limit
\begin{align}\label{limber}
\lim_{T\to\infty}\frac{1}{T^m}\inf\Bigl\{E_1(u;x_T+Q_T): \
 \<u\>^{d,1}_{x_T+Q_T}=z  \Bigr\}
\end{align}
exists for all $z\in \B$.
\end{prop}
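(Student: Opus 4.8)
The goal is to show that the limit in \eqref{limber} exists and is independent of the choice of the translations $x_T$. The plan is to use a nearly-subadditive argument on the quantities
$$
g_T(z):=\inf\Bigl\{E_1(u;x_T+Q_T):\<u\>^{d,1}_{x_T+Q_T}=z\Bigr\},
$$
in the spirit of the classical proof for homogenization of integral functionals, but with two non-trivial modifications forced by the present setting: the constraint set is the non-convex manifold $\s$ at the nodal level, and the admissible competitors must satisfy the \emph{sharp} average constraint. First I would reduce to the case $x_T=0$ by using periodicity \eqref{fperiodicity}: replacing $x_T$ by a point of $\ell\Z^m$ at bounded distance changes the energy only by a boundary term of order $T^{m-1}$ (using boundedness \eqref{fgrowth}) and changes the number of nodes only by a lower-order amount, so the constrained infimum changes by $o(T^m)$; thus it suffices to study $g_T(z)$ for cubes centred conveniently and, a posteriori, the limit will not see the shift $x_T$.

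The core is an almost-subadditivity estimate: given $S\ll T$, tile $Q_T$ by roughly $(T/S)^m$ translated copies of $Q_S$ (plus a boundary layer of volume $O(ST^{m-1})$). On each small cube $Q_S^{(k)}$ pick a near-optimal competitor $u^{(k)}$ for $g_S(z_k)$, where the averages $z_k$ are chosen so that their mean equals $z$; the obvious choice is $z_k\equiv z$. Juxtaposing these competitors gives a function on (most of) $Q_T$ whose energy is $\sum_k g_S(z)+$ (interaction across the interfaces between cubes, of order $T^{m-1}R\cdot\|G\|_\infty/S$ per interface, hence $O(RT^m/S)$ in total) $+$ (boundary-layer contribution $O(R\|G\|_\infty ST^{m-1})$). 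The piecewise-defined function has discrete average over $Q_T$ equal to $z$ up to an error coming only from the boundary layer and from the slightly different cardinalities of $\Z_1(Q_S^{(k)})$ versus $S^m$; this discrepancy is $O(ST^{m-1}+T^{m-1})$ nodes, so the average is off by $O(S/T)$, which has modulus $<1$ for $T$ large if $|z|<1$. Here is where Lemma \ref{lemmamedia} enters: applying it with $A=Q_T$ and $\delta_j=1$, we correct the competitor on a vanishing fraction of nodes to achieve the \emph{exact} average $z$, at the cost of an energy change of order $o(T^m)$ (each modified node touches at most $O(R)$ interactions, each bounded by $\|G\|_\infty$, and the number of modified nodes is $o(T^m)$). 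This yields
$$
g_T(z)\le \Bigl(\frac T S\Bigr)^m g_S(z)+C\Bigl(\frac{T^m}{S}+ST^{m-1}\Bigr)+o(T^m).
$$
Dividing by $T^m$, sending $T\to\infty$ and then $S\to\infty$ gives $\limsup_{T}T^{-m}g_T(z)\le \liminf_{S}S^{-m}g_S(z)$, i.e.\ the limit exists. The case $|z|=1$ is handled separately and trivially: then $z\in\s$ and the only admissible competitor (up to the sharp constraint) is essentially the constant $z$, so $g_T(z)/T^m$ converges to the corresponding limit of averaged two-point energies by \eqref{fperiodicity}, or one argues by the same tiling without needing Lemma \ref{lemmamedia} since no correction is required.

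The main obstacle is the bookkeeping around the \emph{exact} average constraint: unlike in the classical (unconstrained, convex) setting, one cannot simply glue near-optimal pieces and forget about the averages, and one cannot use the weaker $\eta$-relaxed constraint of \eqref{ABGhom}. The resolution is precisely the two-step mechanism highlighted in the introduction — first glue to get the average approximately right (the approximation being $O(S/T)\to 0$, with modulus bounded away from $1$ since $|z|<1$), then invoke Lemma \ref{lemmamedia} to snap it to exactly $z$ by altering only $o(T^m)$ nodes. One must check that the hypotheses of Lemma \ref{lemmamedia} genuinely apply: the glued function, extended by piecewise constants, converges weak-$*$ (along a subsequence in $T$) to a limit with average of modulus $<1$ — this follows because its discrete average is within $O(S/T)$ of $z$ and $|z|<1$. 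A secondary, purely technical point is to verify that the near-optimality of the chosen $u^{(k)}$ and the subadditive error terms can be made uniform in $z$ on compact subsets of the open ball, which is needed later for continuity of $G_{\hom}$; this uses only the boundedness \eqref{fgrowth} and the uniformity of the correction in Lemma \ref{lemmamedia}.
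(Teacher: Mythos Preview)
Your overall architecture --- reduce to $x_T=0$ by periodicity, tile $Q_T$ by near-optimal copies on cubes $Q_S$, control interface and boundary contributions by \eqref{fgrowth}, and deduce $\limsup\le\liminf$ --- coincides with the paper's. The genuine difference is in how the \emph{exact} average constraint is enforced. The paper does not invoke Lemma~\ref{lemmamedia} here at all: since the values on the leftover strip $\Z_1(Q_s)\setminus Q_{s,t}$ are entirely at our disposal, it applies Lemma~\ref{lemmapalle} directly to choose $\bar u$ on those $\#\Z_1(Q_s)-\#Q_{s,t}$ nodes so that $\sum_{\beta}\bar u(\beta)=z\bigl(\#\Z_1(Q_s)-\#Q_{s,t}\bigr)$, which is possible precisely because this target vector has norm at most the number of available nodes (trivially when $|z|<1$, and by taking $\bar u\equiv z$ when $|z|=1$). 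This hits the constraint in one stroke and keeps the argument elementary. Your route --- fill the strip arbitrarily, then call Lemma~\ref{lemmamedia} to snap the average to $z$ --- also works, but is a detour through a heavier tool designed for the blow-up step (where one has \emph{no} free nodes). Note also that Lemma~\ref{lemmamedia} as stated lives on a fixed domain with a weak$^*$-convergent sequence, so to apply it you must rescale $Q_T$ to $Q_1$ with $\delta_j=1/T_j$ and, strictly speaking, use the quantitative content of its proof (the number of modified nodes is $O(\eta_T\,T^m)=O(ST^{m-1})$) rather than the bare statement; otherwise you only obtain the estimate along subsequences. In short: your argument is sound, but the paper's use of Lemma~\ref{lemmapalle} on the free boundary layer is both simpler and avoids these technical wrinkles.
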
 

\begin{proof}
Let $z\in\B$ be fixed. In the following we will assume $G$  to be $1$-periodic (which means that in \eqref{fperiodicity} we consider $l=1$) and $x_T=0$,  since the general case can be derived similarly following arguments already present for example in \cite{AC}  and \cite{ACG} and only needing a heavier notation. Let $t>0$ and consider the function
\begin{align}
\label{gt0}
g_t(z)=\frac{1}{t^m}\inf \Bigl\{E_1(u,\zeta;Q_t):\
  \<u\>^{d,1}_{Q_t}=z  \Bigr\}.
\end{align}
In the rest of the proof we will  drop the dependence on $z$. Let $u_t$ be a test function for $g_t$ such that
\begin{align}
\label{gt}
\frac{1}{t^m}E_1(u_t;Q_t)\leq g_t +\frac{1}{t},
\end{align}
For every $s>t$ we want to prove that $g_s<g_t$ up to a controlled error.

For fixed $s,t$, we introduce the following notation:  $$I:=\left\{0,\dots,\left\lfloor \dfrac{s}{t}\right\rfloor-1\right\}^m.$$
We can construct a test functions for $g_s$ as
\begin{align*}
&u_s(\beta)=\begin{cases}
					u_t(\beta- ti ) \qquad \text{if} \quad \beta\in  ti +Q_t \quad i\in I\\
					\bar{u}(\beta) \qquad \qquad \text{otherwise},
					\end{cases}
\end{align*}
where $\bar{u}$ is a  $\s$-valued function such that $\<u_s\>^{d,1}_{Q_s}=z$. We can choose such $\bar{u}$ thanks to Lemma \ref{lemmapalle}: define 
\begin{gather*}
\Z(Q_s)=\Z^m\cap Q_s, \qquad
 Q_{s,t}=\Bigl(\bigcup_{i\in I}( ti +Q_t)\cap\Z(Q_s)\Bigr).
\end{gather*}
We want $\bar{u}$ to be such that 
\begin{align*}
\sum_{\beta \in \Z(Q_s)}u_s(\beta)=z\#(\Z(Q_s)). 
\end{align*}
Equivalently
\begin{align*}
 \sum_{\substack{\beta\in Q_{s,t} \\  \beta \in  ti +Q_t}}u_t(\beta- ti )+\sum_{\beta\in \Z(Q_s)\backslash Q_{s,t}}\bar{u}(\beta)=z\#(\Z(Q_s)),
 \end{align*}
 which means that
 \begin{align}
\label{vectsum}
& \sum_{\beta\in \Z(Q_s)\backslash Q_{s,t}}\bar{u}(\beta)=z\Bigl(\#(\Z(Q_s))-\#(Q_{s,t})\Bigr).
\end{align}
On the left-hand side of \eqref{vectsum} we are summing $\#(\Z(Q_s))-\#(Q_{s,t})$ vectors in $\s$ while on the right-hand side we have a vector which belongs to a ball whose radius is at most $\#(\Z(Q_s))-\#(Q_{s,t})$.

If $|z|<1$, thanks to Lemma \ref{lemmapalle}  we know that it is possible to choose the values of $\bar{u}$ in such a way that the  relation \eqref{vectsum} is satisfied.

If $|z|=1$, we simply set $\bar{u}(\beta)\equiv z$, and again \eqref{vectsum} is satisfied.

Moreover we observe that
\begin{multline*}
R^\xi_1(Q_s)\subseteq \left(\bigcup_{i\in I}R^\xi_1( ti +Q_t)\right)\cup \left(R^\xi_1\Bigl(Q_s\backslash\bigcup_{i\in I}( ti +Q_t)\Bigr)\right)\cup\\
\left(\bigcup_{i\in I}( ti +(\{0,\dots,t+R\}^N\backslash\{0,\dots,t-R\}^N))\right)
\end{multline*}
and if $\beta$ belongs to one of the last two set of indices, then $D^\xi_1 \zeta_s(\beta)=M(\xi/|\xi|)$.\\
Recalling now \eqref{fgrowth}, for some $\bar{C}>0$ big enough,  we have that
\begin{align*}
&g_s\leq \frac{1}{s^m}E_1(u_s;Q_s) \\
&
\leq\left\lfloor\dfrac{s}{t} \right\rfloor^m\frac{1}{s^m}E_1(u_t;Q_t)+\frac{1}{s^m}\bar{C}\left(s^m-\left\lfloor\dfrac{s}{t} \right\rfloor^mt^m+\left\lfloor\dfrac{s}{t} \right\rfloor^m((t+R)^m-(t-R)^m)\right).
\end{align*}
Using now \eqref{gt} we get
\begin{multline}
\label{gtestim}
g_s\leq \left\lfloor\dfrac{s}{t} \right\rfloor^m \frac{t^m}{s^m}\left(g_t+\frac{1}{t}\right)+\frac{1}{s^m}\bar{C}\left(s^m-\left\lfloor\dfrac{s}{t} \right\rfloor^mt^m+\left\lfloor\dfrac{s}{t} \right\rfloor^m((t+R)^m-(t-R)^m)\right).
\end{multline}
Letting now $s\to+\infty$ and then $t\to+\infty$, we have that
\begin{align*}
\limsup_{s\to+\infty} g_s(z)\leq \liminf_{t \to+\infty}g_t(z),
\end{align*}
which concludes the proof.
\end{proof}

\begin{oss}\rm
Note that for $z\in\s$ the only test function for the minimum problem in \eqref{limber} is the constant $z$, so that the limit is actually an average over the period with $u=z$.
\end{oss}
  
  \begin{prop}
\label{ghomconvexity}  
   The function $G_{\hom}$ as defined in \eqref{fghom} is convex and lower semicontinuous in $\B$.
    \end{prop}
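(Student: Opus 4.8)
The plan is to prove convexity directly from the asymptotic formula \eqref{fghom}, using an almost-minimizer gluing construction in the spirit of the proof of Proposition \ref{minimumexistance}, and then to derive lower semicontinuity as an automatic consequence (a convex function on the interior of $\B$ is continuous, and on a finite-dimensional set convexity plus the explicit $\liminf$ structure of the formula gives lower semicontinuity up to the boundary). Concretely, fix $z_1,z_2\in\B$ and $\lambda\in(0,1)$, set $z=\lambda z_1+(1-\lambda)z_2$, and for large $T$ pick near-optimal test functions $u_1,u_2$ on $Q_T$ for $G_{\hom}(z_1)$, $G_{\hom}(z_2)$ respectively, i.e.\ with $\<u_i\>^{d,1}_{Q_T}=z_i$ and $\frac{1}{T^m}E_1(u_i;Q_T)\le G_{\hom}(z_i)+o(1)$. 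Partition a large cube $Q_S$ into translated copies of $Q_T$, assigning a fraction $\approx\lambda$ of the copies the function $u_1$ (translated) and a fraction $\approx 1-\lambda$ the function $u_2$ (translated); on the leftover boundary layer, which has volume $o(S^m)$, define the competitor arbitrarily in $\s$ except that we use Lemma \ref{lemmapalle} on that layer to correct the discrete average of the glued function over $Q_S$ to be exactly $z$ (this is possible when $|z|<1$; when $|z|=1$ both $z_1=z_2=z$ and there is nothing to prove). By \eqref{fgrowth} the interaction terms straddling the interfaces between copies, together with the boundary layer, contribute at most $C\cdot o(S^m)$, so $\frac{1}{S^m}E_1(u_{\text{glued}};Q_S)\le \lambda G_{\hom}(z_1)+(1-\lambda)G_{\hom}(z_2)+o(1)$; letting $S\to\infty$ and using that the limit in \eqref{fghom} exists (Proposition \ref{minimumexistance}) yields $G_{\hom}(z)\le \lambda G_{\hom}(z_1)+(1-\lambda)G_{\hom}(z_2)$.

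The main obstacle is the bookkeeping for the average constraint during the gluing: after placing the translated copies, the partial sum over the copies equals $\#(Q_{S,T})\cdot(\lambda' z_1+(1-\lambda')z_2)$ for some $\lambda'$ close to $\lambda$, and one must check that the required corrector vector on the boundary layer has modulus no larger than the number of nodes available in that layer, so that Lemma \ref{lemmapalle} genuinely applies. This is the same quantitative estimate that appears around \eqref{vectsum}, and it goes through because the boundary layer has $\Theta(S^{m-1}T)$ nodes, which dominates the discrepancy $|\#(Q_{S,T})(\lambda'-\lambda)|$ once $T$ is fixed and $S\to\infty$; so the correction is feasible. One also needs $\lfloor S/T\rfloor^m/(S/T)^m\to1$ and the ratio of "good" copies to total copies to converge to $\lambda$, both of which are elementary.

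For lower semicontinuity: since $G_{\hom}$ is convex and finite on all of $\B$ (finiteness follows from \eqref{fgrowth}, which bounds $\mathcal{E}_T$ hence $g_T$ uniformly), it is automatically continuous on the open ball $\mathrm{int}\,\B$. It remains to check lower semicontinuity at boundary points $z_0\in\s$. Here I would use that for $z_0\in\s$ the only admissible test function is the constant $z_0$, so $G_{\hom}(z_0)=\lim_{T\to\infty}\frac{1}{T^m}\sum_{\xi}\sum_{\beta\in R_1^\xi(Q_T)}G^\xi(\beta,z_0,z_0)$, and then estimate from below: given $z_j\to z_0$ with $z_j\in\B$, take near-optimal $u_j$ on large cubes; weak-$*$ precompactness of the piecewise-constant extensions plus the lower bound coming from \eqref{fsci} (lower semicontinuity of $G$) forces $\liminf_j G_{\hom}(z_j)\ge G_{\hom}(z_0)$. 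Alternatively, and more cheaply, one can invoke that $G_{\hom}$ is the $\Gamma$-limit integrand (Theorem \ref{mainresult}) and that $\Gamma$-limits of integral functionals with the stated growth have lower semicontinuous integrands; but to keep Section \ref{homer} self-contained I would prefer the direct argument, noting that the convexity already established reduces boundary lower semicontinuity to controlling the single inequality $\liminf_{z\to z_0,\,z\in\B}G_{\hom}(z)\ge G_{\hom}(z_0)$ along radial sequences, which the gluing estimate above (run in reverse, decomposing an almost-optimal configuration for $z$ near $z_0$) delivers.
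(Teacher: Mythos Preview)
Your proposal is correct and follows essentially the same approach as the paper: the convexity argument is the identical gluing construction (the paper organises the copies in two slabs of $Q_h$ of respective thicknesses $k\lfloor ht/k\rfloor$ and $k\lfloor h(1-t)/k\rfloor$, with $\bar u$ on the leftover middle strip, but this is just your ``fraction $\approx\lambda$'' made explicit), and Lemma \ref{lemmapalle} is invoked for the average correction exactly as you describe. For lower semicontinuity the paper is much terser than you---it simply notes that convexity plus boundedness gives continuity in the interior of $\B$ and cites \eqref{fsci} for boundary points---so your more detailed boundary discussion is extra work but not a different strategy.
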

  \begin{proof}
We want to show that  for every $ 0\leq t\leq 1$  and for every $z_1,z_2\in \B$ it holds:
  \begin{align}
  G_{\hom}(tz_1+(1-t)z_2,M)\leq tG_{\hom}(z_1,M)+(1-t)G_{\hom}(z_2,M). 
  \end{align}
Let $k\in \N$ be fixed; having in mind \eqref{fperiodicity} and thanks to Proposition \ref{minimumexistance}, it is nor restrictive to take $k\in l\N$. We define 
\begin{align}
\label{gk}
g_k(z)=\frac{1}{k^m}\inf \Bigl\{E_1(u;Q_k):\
\<u\>^{d,1}_{Q_k}=z \Bigr\}.
\end{align}
 In the following we will denote $g_k^1=g_k(z_1)$, $g_k^2=g_k(z_2).$ 
 
Let $u_k^1$ and $u_k^2$ be functions such that
\begin{align}
\label{gk1}
&\frac{1}{k^m}E_1(u_k^1;Q_k)\leq g_k^1 +\frac{1}{k},\\
\label{gk2}
&\frac{1}{k^m}E_1(u_k^2;Q_k)\leq g_k^2 +\frac{1}{k}.
\end{align}
Let $h>k$ be such that $h/k\in\N$.
Denote $g_h=g_h(tz_1+(1-t)z_2)$, we define the following test function for $g_h$:
\begin{align*}
&u_h(\beta)=\\
&\begin{cases}
					u_k^1(\beta-ki) \quad \text{if} \quad \beta\in ki+Q_k \quad i\in\left\{0,\dots,\dfrac{h}{k}-1\right\}^{m-1}\times\left\{0,\dots,\left\lfloor \dfrac{h}{k}t\right\rfloor-1\right\} \\
					u_k^2(\beta-ki) \quad \text{if} \quad \beta\in ki+Q_k \quad i\in\left\{0,\dots, \dfrac{h}{k}-1\right\}^{m-1}\times\left\{\dfrac{h}{k}-\left\lfloor \dfrac{h(1-t)}{k}\right\rfloor,\dots,\dfrac{h}{k}-1\right\} \\
					\bar{u}(\beta) \quad \qquad \text{otherwise},
					\end{cases}
\end{align*}
Reasoning as in  Proposition \ref{minimumexistance}, thanks to Lemma \ref{lemmapalle}, we can choose the values of $\bar{u}$ such that $\<u_s\>^{d,1}_{Q_s}=tz_1+(1-t)z_2$.\\
By \eqref{fgrowth} and \eqref{fperiodicity}, for some $\bar{C}>0$ we get
\begin{align*}
g_h&\leq \frac{1}{h^m}E_1(u_h,\zeta_h;Q_h)\\
&\leq\frac{1}{h^m}\left(\frac{h}{k}\right)^{m-1}\left\lfloor\dfrac{h}{k}t\right\rfloor E_1(u_k^1;Q_k)
+\frac{1}{h^m}\left(\frac{h}{k}\right)^{m-1}\left\lfloor\dfrac{h(1-t)}{k}\right\rfloor E_1(u_k^2;Q_k)\\
&+\frac{1}{h^m}\bar{C}\left(h^m-\left(\frac{h}{k}\right)^{m-1}\left(\left\lfloor\dfrac{h}{k}t\right\rfloor +\left\lfloor\dfrac{h(1-t)}{k}\right\rfloor\right) k^m\right)\\
&+\frac{1}{h^m}\bar{C}\left(\frac{h}{k}\right)^m((k+R)^m-(k-R)^m).
\end{align*}
Then, thanks to \eqref{gk1} and \eqref{gk2}, we can rewrite the above relation as 
\begin{align*}
g_h&\leq \frac{k^m}{h^m}\left(\frac{h}{k}\right)^{m-1}\left\lfloor\dfrac{h}{k}t\right\rfloor\left(g_k^1+\frac{1}{k}\right)+\frac{k^m}{h^m}\left(\frac{h}{k}\right)^{m-1}\left\lfloor\dfrac{h(1-t)}{k}\right\rfloor\left(g_k^2+\frac{1}{k}\right)\\
&+\frac{1}{h^m}\bar{C}\left(h^m-\left(\frac{h}{k}\right)^{m-1}\left(\left\lfloor\dfrac{h}{k}t\right\rfloor +\left\lfloor\dfrac{h(1-t)}{k}\right\rfloor\right) k^m\right)\\
&+\frac{1}{h^m}\bar{C}\left(\frac{h}{k}\right)^m((k+R)^m-(k-R)^m).
\end{align*}
Letting $h\to+\infty$ and then $k\to+\infty$, we can conclude the proof of the convexity.

From the convexity and the boundedness of $G_{\hom}$ we deduce that it is continuous in the interior of $\B$. 
Moreover, by \eqref{fsci} it is lower semicontinuous at points on the boundary of $\B$
  \end{proof}

  \begin{oss}\rm
  Note that the function $G_{\hom}$ may not be continuous on $\B$, even if it is convex. It suffices to take a nearest-neighbor energy $G=G^\xi$ independent on $\alpha$, with $G(e_1, e_1)=0$ and $G(u,v)= 1$ otherwise. In this case, in particular $G_{\hom}(e_1)=0$ and $G_{\hom}(z)=1$ if $z\in \s$, $z\neq e_1$.
  \end{oss}

\section{Proof of the homogenization theorem}\label{homproof}
Thanks to the geometric lemmas in Section \ref{lemmas}, we can now easily give a proof of the homogenization theorem. 
We remark that it will be sufficient to prove a lower bound, since we may resort to the homogenization result of Alicandro, Cicalese and Gloria \cite{ACG} in order to give an upper bound for the homogenized functional. Indeed, by \eqref{ABGhom} we have $\overline G_{\hom}\le  G_{\hom}$, so that functional \eqref{home} is an upper bound for the homogenized energy. Note that we could directly proof the upper bound using approximation results and constructions starting from the formula for $G_{\hom}$, but this would be essentially a repetition of the arguments in \cite{ACG}.

\bigskip
In order to prove a lower bound we will make use of Lemma \ref{lemmamedia} and of the Fonseca-M\"uller blow-up technique \cite{BRA08,FON}  .
Let $\e_j\to 0$ be a vanishing sequence of  parameters, let  $u\in L^{\infty}(\Omega,\B)$ and let $u_j\rightharpoonup^* u$ with $u\in L^{\infty}(\Omega,\B)$. 
We define the measures $\mu_j$ by setting
\begin{gather*}
\mu_j(A)=\sum_{\substack{\xi\in \Z^m \\ |\xi|\leq R  }}\sum_{\alpha\in R_{\e_j}^{\xi}(\Omega)}\e_j^mG^\xi\left(\frac{\alpha}{\e_j}, u_j(\alpha),u_j(\alpha+\e_j\xi)\right)\delta_{\alpha+\frac{\e_j}{2}\xi}(A)
\end{gather*}  for all $A\in\mathcal{B}(\R^n)$,
where $\delta_x$  denotes the usual Dirac delta at $x$. Since the measures are equibounded, by the weak* compactness of measures there exists a limit measure $\mu$ on $\Omega$ such that, up to subsequences, $\mu_j\rightharpoonup^* \mu$.
We consider the Radon-Nikodym decomposition of the limit measure $\mu$ with respect to the $m$-dimensional Lebesgue measure $\mathcal{L}^m$:
\begin{gather*}
\mu=\frac{d\mu}{dx}d\mathcal{L}^m+\mu^s, \qquad \mu^s\bot\,\mathcal{L}^m.
\end{gather*}
Besicovitch Derivation Theorem \cite{uno} states that almost every point in $\Omega$ with respect to $\mathcal{L}^m$ is a Lebesgue point for $\mu$. So, we may suppose that $x_0\in\Z_{\e_j}(\Omega)$ be a Lebesgue point both for $u$ and for $\mu$ and let $Q_\rho(x_0)=x_0+(-\rho/2,\rho/2)^m$. We then have
\begin{align}
\label{lebesguepoint}
\frac{d\mu}{dx}(x_0)=\lim_{\rho\to 0^+}\frac{\mu(Q_\rho(x_0))}{\mathcal{L}^m(Q_\rho(x_0))}=\lim_{\rho\to 0^+}\frac{1}{\rho^m}\mu(Q_\rho(x_0)).
\end{align} 
Recalling that
\begin{gather}
\label{sequence} \mu(Q_\rho(x_0))=\lim_{j\to+\infty}\mu_j(Q_\rho(x_0))
\end{gather}
except for a countable set of $\rho$,
by a diagonalization argument on \eqref{lebesguepoint} and \eqref{sequence} we can extract a subsequence $\{\rho_j\}$ such that it holds
\begin{align*}
\frac{d\mu}{dx}(x_0)=\lim_{j\to +\infty}\frac{1}{\rho_j^m}\mu_j(Q_{\rho_j}(x_0)).
\end{align*}
This means that
\begin{align}
\label{measure1}
\frac{d\mu}{dx}(x_0)=
\lim_{j\to +\infty}\left(\frac{\e_j}{\rho_j}\right)^m\sum_{\substack{\xi\in \Z^m \\ |\xi|\leq R  }}\sum_{\alpha\in R_{\e_j}^{\xi}(\Omega)}G^\xi\left(\frac{\alpha}{\e_j}, u_j(\alpha),u_j(\alpha+\e_j\xi)\right)\delta_{\alpha+\frac{\e_j}{2}\xi}(Q_{\rho_j}(x_0)).
\end{align}

Also note that, by the weak$^*$ convergence of $u_j$ to $u$, we have
$ \<u\>_{Q_{\rho}(x_0)}=\lim_j  \<u_j\>^{\e_j}_{Q_{\rho}(x_0)}=\lim_j  \<u_j\>^{d,\e_j}_{Q_{\rho}(x_0)}$,
and that for almost every $x_0$ we have
$\displaystyle \lim_{\rho\to 0^+} \<u\>_{Q_{\rho}(x_0)}= u(x_0)$,
so that we may assume that
$$
\lim_j  \<u_j\>^{d,\e_j}_{Q_{\rho_j}(x_0)}=u(x_0).
$$
We can parameterizing functions on a common unit cube, by setting
$
\delta_j={\e_j\over\rho_j}$,and $ v_j(\gamma)=u_j(x_0+\rho_j\gamma).
$
With this parameterization,  \eqref{measure1} reads
$$
\frac{d\mu}{dx}(x_0)=
\lim_{j\to +\infty}\delta_j^m\sum_{\substack{\xi\in \Z^m \\ |\xi|\leq R  }}\sum_{\gamma+{x_0\over\rho_j}\in R_{\delta_j}^{\xi}({1\over\rho_j}\Omega)}G^\xi\left(\frac{x_0}{\e_j}+{\gamma\over\delta_j}, v_j(\gamma),v_j(\gamma+\delta_j\xi)\right)\delta_{\gamma-{x_0\over\rho_j}+\frac{\delta_j}{2}\xi}(Q_{1}(0)).
$$

Note that we have
$\lim_j  \<v_j\>^{d,\delta_j}_{Q_{1}(0)}=u(x_0)$.
We can then apply Lemma \ref{lemmamedia} with $v_j$ in the place of $u_j$ and $A=Q_1(0)$. We obtain a family $\widetilde v_j$ with 
\begin{equation}\label{limage2}
\<\widetilde v_j\>^{d,\delta_j}_{Q_{1}(0)}=u(x_0),
\end{equation}
and such that $\widetilde v_j(\gamma)= v_j(\gamma)$ except for a set $P_j$ of $\gamma$ with $\#P_j=o(\delta_j^{-m})$.
From this, the finiteness of the range of interactions and the boundedness of $G^\xi$, we further rewrite  as
$$
\frac{d\mu}{dx}(x_0)=
\lim_{j\to +\infty}\delta_j^m\sum_{\substack{\xi\in \Z^m \\ |\xi|\leq R  }}\sum_{\gamma+{x_0\over\rho_j}\in R_{\delta_j}^{\xi}({1\over\rho_j}\Omega)}G^\xi\left(\frac{x_0}{\e_j}+{\gamma\over\delta_j}, \widetilde v_ j(\gamma),\widetilde v_j(\gamma+\delta_j\xi)\right)\delta_{\gamma-{x_0\over\rho_j}+\frac{\delta_j}{2}\xi}(Q_{1}(0)).
$$

We now set $$T_j=\frac{\rho_j}{\e_j}=\delta_j^{-1}, \qquad x_{T_j}={x_0\over\e_j},$$
So that
\begin{eqnarray*}
&&\frac{d\mu}{dx}(x_0)\\
&=&\lim_{j\to +\infty}{1\over T_j^m}\sum_{\substack{\xi\in \Z^m \\ |\xi|\leq R  }}\sum_{\gamma+{x_{T_j}\over T_j}\in R_{T_j^{-1}}^{\xi}({1\over\rho_j}\Omega)}G^\xi\left(x_{T_j}+{T_j\gamma}, \widetilde v_ j(\gamma),\widetilde v_j(\gamma+{\xi\over T_j})\right)\delta_{\gamma-{x_{T_j}\over T_j}+\frac{1}{2T_j}\xi}(Q_{1}(0))
\\
&=&\lim_{j\to +\infty}{1\over T_j^m}\sum_{\substack{\xi\in \Z^m \\ |\xi|\leq R  }}\sum_{\eta+{x_{T_j}}\in R_{1}^{\xi}({1\over\e_j}\Omega)}G^\xi\left(x_{T_j}+{\eta}, \widetilde v_ j({\eta\over T_j}),\widetilde v_j({\eta+{\xi}\over T_j})\right)\delta_{\eta-{x_{T_j}}+\frac{1}{2}\xi}(Q_{T_j}(0)).
\end{eqnarray*}

We now set $w_j(\eta)=  v_ j({\eta\over T_j})$ and use the boundedness of $G^\xi$ and $R$ to deduce that
\begin{eqnarray*}
&&\frac{d\mu}{dx}(x_0)\\
&\ge&\lim_{j\to +\infty}{1\over T_j^m}\sum_{\substack{\xi\in \Z^m \\ |\xi|\leq R  }}\sum_{\eta+{x_{T_j}}\in R_{1}^{\xi}(Q_{T_j})}G^\xi\left(x_{T_j}+{\eta}, \widetilde w_ j({\eta}),\widetilde v_j({\eta+{\xi}})\right)\delta_{\eta-{x_{T_j}}+\frac{1}{2}\xi}(Q_{T_j}(0))
\end{eqnarray*}
(note indeed that by considering interactions in $R_{1}^{\xi}(Q_{T_j})$ we neglect a contribution of a number of interactions of order $O(T_j^{m-1})$; i.e., an energy contribution of order $O(T_j^{-1})$). Noting that $w_j$ satisfies the constraint
$$\<\widetilde w_j\>^{d,1}_{{x_{T_j}+Q_{T_j}(0)}}=u(x_0),$$
thanks to \eqref{limage2}, by Proposition \ref{minimumexistance} we finally deduce that 
\begin{eqnarray*}
&&\frac{d\mu}{dx}(x_0)\ge \lim_{j\to\infty}\frac{1}{T_j^m}\inf\Bigl\{E_1(w;x_{T_j}+Q_{T_j}): \
 \<w\>^{d,1}_{x_{T_j}+Q_{T_j}}=u(x_0)  \Bigr\}= G_{\hom}(u(x_0)).
\end{eqnarray*}
Since this holds for almost all $x_0\in\Omega$, we have proved the desired lower bound.

\subsection*{Acknowledgments.}
The authors acknowledge the MIUR Excellence Department Project awarded to the Department of Mathematics, University of Rome Tor Vergata, CUP E83C18000100006.

\end{document}